\def\@seccntformat#1{%
  \protect\textup{%
    \protect\@secnumfont
    \expandafter\protect\csname format#1\endcsname 
    \csname the#1\endcsname
    \protect\@secnumpunct
  }%
}
\begin{document}
\title[Schur polynomials in all primitive $n$th roots of unity]
{The Schur polynomials \\
in all primitive $n$th roots of unity}
\author[M. Hidaka]{Masaki Hidaka}
\address{Graduate School of Science and Engineering, 
Kagoshima University, Kagoshima 890-0065, Japan}
%%\address{Future Net Co., Ltd., Fukuoka 810-0022, Japan }
\email{k3575488@gmail.com}
\author[M. Itoh]{Minoru Itoh}
\address{Graduate School of Science and Engineering, 
Kagoshima University, Kagoshima 890-0065, Japan}
\email{itoh@sci.kagoshima-u.ac.jp }
\date{}
\begin{abstract}
We show that the Schur polynomials in all primitive $n$th roots of unity 
are $1$, $0$, or $-1$, 
if $n$ has at most two distinct odd prime factors.
This result can be regarded as a generalization of properties
of the coefficients of the cyclotomic polynomial and its multiplicative inverse.
The key to the proof is the concept of a unimodular system of vectors.
Namely, this result can be reduced to the unimodularity of the tensor product of two maximal circuits
(here we call a vector system a maximal circuit,
if it can be expressed as $B \cup \{ -\sum B\}$ with some basis $B$).
\end{abstract}
\thanks{This research was partially supported by JSPS Grant-in-Aid for Scientific Research (C) 16K05067 and 21K03209.
}
\keywords{primitive root of unity, cyclotomic polynomial, Schur polynomial, unimodular system, totally unimodular matrix, matroid}
\subjclass[2010]{05E05, 05C50, 11B83, 11C08, 11C20, 11R18.}
%%05E05  	Symmetric functions and generalizations
%%05C50  	Graphs and linear algebra (matrices, eigenvalues, etc.)
%%11B83  	Special sequences and polynomials
%%11C08  	Polynomials in number theory
%%11C20  	Matrices, determinants in number theory
%%11R18  	Cyclotomic extensions
%%\subjclass[2010]{Primary 15A72, 15A75; Secondary 16R, 15A24, 15B33;}
\maketitle
\theoremstyle{plain}
   \newtheorem{theorem}{Theorem}[section]
   \newtheorem{proposition}[theorem]{Proposition}%%[section]
   \newtheorem{lemma}[theorem]{Lemma}%%[section]
   \newtheorem{corollary}[theorem]{Corollary}%%[section]
\theoremstyle{definition}
   \newtheorem{definition}[theorem]{Definition}
   \newtheorem{example}[theorem]{Example}
\theoremstyle{remark}
   \newtheorem*{remark}{Remark}
   \newtheorem*{remarks}{Remarks}
\numberwithin{equation}{section}
%

%%%%%%%%%%%%%%%%%%%%%%%%%%%%%%%%%%%%%%%%%%%%%%%%%%%%%%%%%%%%%%%%%%%%%%%%%%%%%%%%%%
%
\section{Introduction}
%
%%%%%%%%%%%%%%%%%%%%%%%%%%%%%%%%%%%%%%%%%%%%%%%%%%%%%%%%%%%%%%%%%%%%%%%%%%%%%%%%%%
%
The following assertion on
the Schur polynomials in all primitive $n$th roots of unity 
is the main theorem of this article\footnote{%
This theorem was first found in the Master's thesis of the first author \cite{H}.}:

\begin{theorem}\label{thm:main}\slshape
   Let $\omega_1,\ldots,\omega_d$ be all primitive $n$th roots of unity
   {\normalfont (}thus $d$ is equal to~$\varphi(n)$, where $\varphi$ is Euler's totient function{\normalfont )},
   and $\lambda$ be a partition whose length is at most $d$.
   Moreover, we assume the following condition on~$n$:
   \begin{quote}
       {\normalfont ($*$)} $n$ has at most two distinct odd prime factors.
   \end{quote}
   Then, we have
   \[
      s_{\lambda}(\omega_1,\ldots,\omega_d) = 1,0, \text{ or } {-1}.
   \]
   Here, $s_{\lambda}$ is the Schur polynomial associated to~$\lambda$.
\end{theorem}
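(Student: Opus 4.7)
The plan is to reduce Theorem~\ref{thm:main} to a unimodularity statement about a concrete vector system, exactly as previewed in the abstract, and then settle that statement by analyzing the tensor structure forced by condition~$(*)$. I would start from the bialternant (Weyl character) formula
\[
s_\lambda(\omega_1,\ldots,\omega_d) = \frac{\det\bigl(\omega_i^{\lambda_j+d-j}\bigr)_{i,j}}{\det\bigl(\omega_i^{d-j}\bigr)_{i,j}},
\]
and rewrite it by fixing a primitive $n$th root $\zeta$ and setting $\omega_i=\zeta^{k_i}$ with $k_i\in(\mathbb{Z}/n\mathbb{Z})^{*}$. The numerator then expands to $\sum_{\sigma\in S_d}\mathrm{sgn}(\sigma)\,\zeta^{\langle k,\,\sigma(\lambda+\delta)\rangle}$ for $\delta=(d-1,\ldots,0)$. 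Since the whole expression is Galois-invariant it lies in~$\mathbb{Z}$; my aim is to repackage this integer as a signed lattice-point count attached to an explicit vector configuration in $\mathbb{Z}^d$.

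I would next identify that configuration. Under condition~$(*)$, write $n=2^{a}p^{b}q^{c}$ with $p,q$ distinct odd primes (where $b$ or $c$ may be zero); the $2$-power part only contributes a global sign via $\omega\mapsto-\omega$ and can be stripped. The Chinese Remainder isomorphism $(\mathbb{Z}/n\mathbb{Z})^{*}\cong(\mathbb{Z}/p^{b}\mathbb{Z})^{*}\times(\mathbb{Z}/q^{c}\mathbb{Z})^{*}$ writes each primitive $n$th root as $\omega_{i,j}=\alpha_i\beta_j$ with $\alpha_i,\beta_j$ primitive $p^{b}$th and $q^{c}$th roots respectively. I would then verify that the exponent configuration attached to a single prime power $p^{b}$ is, after a unimodular change of basis, a \emph{maximal circuit} in the sense of the abstract---the prototype being $\{e_1,\ldots,e_{p-1},-(e_1+\cdots+e_{p-1})\}$ coming from $1+x+\cdots+x^{p-1}=0$, with the general $p^{b}$ reduced to this via $\Phi_{p^{b}}(x)=\Phi_p(x^{p^{b-1}})$. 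The bilinearity of the multiplication then presents the full exponent system as the tensor product of the two maximal circuits.

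Granting this reduction, the theorem becomes the assertion \emph{the tensor product of two maximal circuits is a unimodular vector system}, i.e.\ every maximal minor of the matrix whose columns are the tensor vectors is $0$ or $\pm 1$. I would prove this either by a Ghouila-Houri--type sign-balancing criterion or by an inductive expansion along one tensor factor, exploiting the $B\cup\{-\sum B\}$ shape of each maximal circuit to control every subdeterminant. Once unimodularity is in hand, the signed bialternant sum collapses to a difference of at most two $\pm 1$ contributions and therefore takes a value in $\{-1,0,1\}$, proving the theorem.

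The \emph{main obstacle} I anticipate is the unimodularity step itself, together with the reduction: one has to verify that every cancellation in the Schur sum is genuinely captured by the single tensor-product configuration, not by some finer arithmetic structure. Condition~$(*)$ must enter essentially here, because the analogous $\{-1,0,1\}$ property fails already when $n$ has three distinct odd prime factors---mirroring the classical blow-up in the coefficients of $\Phi_{105}$---so any correct argument has to use the two-factor restriction in a crucial way.
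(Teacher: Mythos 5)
Your reduction is essentially the paper's: the bialternant formula turns the claim into the unimodularity of the system $\Omega_n$ of power-vectors $(\omega_1^k,\ldots,\omega_d^k)$, which via the identification with $Z_n\subset\mathbb{Q}(\zeta_n)$, the Chinese Remainder Theorem, and the prime-power decomposition reduces to the unimodularity of $Z_p\otimes Z_q$ for odd primes $p,q$, i.e.\ of the tensor product of two maximal circuits (Theorem~\ref{thm:X_otimes_Y_is_unimodular}). One imprecision: for $b>1$ the configuration attached to $p^{b}$ is \emph{not} a single maximal circuit up to change of basis ($Z_{p^{b}}$ has $p^{b}$ elements in a space of dimension $p^{b-1}(p-1)$); it is a disjoint union of $p^{b-1}$ maximal circuits over the direct-sum decomposition $\mathbb{Q}(\zeta_{p^{b}})\cong\mathbb{Q}(\zeta_p)^{\oplus p^{b-1}}$, and you need the (easy) lemma that unimodularity is preserved under such direct sums; likewise the $2$-power part is absorbed by the lemma that $\{1,-1\}\otimes X$ is unimodular when $X$ is. Your observation $\Phi_{p^{b}}(x)=\Phi_p(x^{p^{b-1}})$ carries the right content, but the bookkeeping should go through direct sums, not a change of basis.

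The genuine gap is the core step: you do not prove that the tensor product of two maximal circuits is unimodular; you only name two candidate methods (a Ghouila--Houri criterion or an induction along one tensor factor) without executing either, and you yourself flag this as the main obstacle. This is the one nontrivial theorem in the entire argument, so as written the proposal is a correct reduction attached to an unproved kernel. For comparison, the paper's proof is short once the right device is found: writing $X=\{e_0,\dots,e_m\}$ and $Y=\{f_0,\dots,f_n\}$ with $X_+\otimes Y_+$ the identity block, Lemma~\ref{lem:(A I)_is_totally_unimodular} reduces everything to the $mn\times(m+n+1)$ matrix of the columns $\{e_0\otimes f_0\}\cup(e_0\otimes Y_+)\cup(X_+\otimes f_0)$, whose transpose is the network matrix of the complete bipartite graph on $(\{e_i\},\{f_j\})$ with respect to a star-shaped tree, hence totally unimodular. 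A direct Ghouila--Houri sign-balancing on all $(m+1)(n+1)$ columns is substantially more delicate than this, and nothing in your sketch indicates how the induction would close; you would need to supply that argument (or an equivalent, e.g.\ the network-matrix realization) for the proof to stand. Note also that condition ($*$) enters only in the reduction (limiting you to \emph{two} tensor factors), not in the unimodularity proof itself, which is a clean statement about any two maximal circuits.
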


The condition ($*$) holds for many natural numbers.
For example, all natural numbers less than $105 = 3 \cdot 5 \cdot 7$ satisfy ($*$).

Theorem~\ref{thm:main} has been known for~$\lambda = (1^k)$ and $(k)$
as properties of the coefficients of the cyclotomic polynomial and its  multiplicative inverse.

First, when $\lambda = (1^k)$, 
the Schur polynomial associated with~$\lambda$ equals the $k$th elementary symmetric polynomial $e_k$.
Thus, we have
\[
   s_{\lambda}(\omega_1,\ldots,\omega_d) = e_k(\omega_1,\ldots,\omega_d),
\]
and this equals the coefficient of~$x^{d-k}$ in the cyclotomic polynomial $\Phi_n(x)$
(up to sign),
because
\[
   \Phi_n(x) = (x-\omega_1) \cdots (x-\omega_d).
\]
As is well known,
A. Migotti \cite{Mi} showed that the coefficients of~$\Phi_n(x)$ are all in the set $\{ 1, 0, -1 \}$,
if $n$ satisfies ($*$).

Secondly, when $\lambda = (k)$, 
the Schur polynomial associated with~$\lambda$ equals the $k$th complete homogeneous symmetric polynomial $h_k$.
Thus, we have
\[
   s_{\lambda}(\omega_1,\ldots,\omega_d) = h_k(\omega_1,\ldots,\omega_d),
\]
and this equals the coefficient of~$x^k$ in~$\Phi_n(x)^{-1}$.
Indeed, we have
\begin{align*}
   \Phi_n(x)^{-1}
   &= (x-\omega_1)^{-1} \cdots (x-\omega_d)^{-1} \\
   &= (-)^d \omega_1 \cdots \omega_d (1 - x\omega_1^{-1})^{-1} \cdots (1 - x\omega_d^{-1})^{-1} \\
   &= \sum_{k \geq 0} x^k h_k(\omega_1^{-1},\ldots,\omega_d^{-1}) \\
   &= \sum_{k \geq 0} x^k h_k(\omega_1,\ldots,\omega_d),
\end{align*}
because 
$(-)^d \omega_1 \cdots \omega_d = 1$ and 
$h_k(\omega_1^{-1},\ldots,\omega_d^{-1}) = h_k(\omega_1,\ldots,\omega_d)$.
P. Moree \cite{Mo} showed that the coefficients of~$\Phi_n(x)^{-1}$ are all in the set $\{ 1, 0, -1 \}$,
if $n$ satisfies ($*$)\footnote{%
Interestingly, the proof in the case $\lambda = (k)$ is easier than that in the case $\lambda = (1^k)$.}.

Theorem~\ref{thm:main} is a generalization of these two results.

The key to the proof is the concept of a unimodular system of vectors
(see Section~\ref{sec:unimodular_systems} for the definition).
Namely, Theorem~\ref{thm:main} is reduced to the following theorem:

\begin{theorem}[Proposition~\ref{prop:X_otimes_Y_is_unimodular} (2)] \label{thm:X_otimes_Y_is_unimodular} \slshape
   The tensor product of two maximal circuits is a unimodular system.
\end{theorem}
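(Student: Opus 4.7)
The plan is to compute the coordinate matrix of $X \otimes Y$ in a natural basis and identify it with a totally unimodular matrix coming from the complete bipartite graph $K_{m+1, n+1}$.

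First, write $X = \{x_0, x_1, \ldots, x_m\}$ with $\{x_1, \ldots, x_m\}$ a basis of $\mathrm{span}(X)$ and $x_0 = -\sum_{i=1}^{m} x_i$, and similarly $Y = \{y_0, y_1, \ldots, y_n\}$. Then $\mathcal{B} = \{x_i \otimes y_j : 1 \le i \le m,\ 1 \le j \le n\}$ is a basis of $\mathrm{span}(X \otimes Y)$, and expanding the $(m+1)(n+1)$ elements of $X \otimes Y$ in $\mathcal{B}$ yields a matrix $M$ of size $mn \times (m+1)(n+1)$. A direct computation shows that its $(j_1, j_2)$-th column equals the standard basis vector $e_{(j_1, j_2)}$ when $j_1, j_2 \ge 1$; the negated row-indicator $-\sum_{i_2} e_{(j_1, i_2)}$ when $j_1 \ge 1$ and $j_2 = 0$; the negated column-indicator $-\sum_{i_1} e_{(i_1, j_2)}$ when $j_1 = 0$ and $j_2 \ge 1$; and the all-ones vector $\sum_{i_1, i_2} e_{(i_1, i_2)}$ when $j_1 = j_2 = 0$. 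Since a vector system is unimodular precisely when its coordinate matrix in some basis drawn from the system is totally unimodular, it suffices to prove $M$ is totally unimodular.

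The next step is to recognize $M$ as a fundamental cycle matrix. View the column indices $(j_1, j_2) \in \{0, \ldots, m\} \times \{0, \ldots, n\}$ as the edges of $K_{m+1, n+1}$ on the bipartite vertex set $\{r_0, \ldots, r_m\} \sqcup \{c_0, \ldots, c_n\}$, with edge $(j_1, j_2)$ joining $r_{j_1}$ to $c_{j_2}$, and take the spanning tree $T$ formed by all edges incident to $r_0$ or to $c_0$. The $mn$ non-tree edges are precisely the pairs $(i, j)$ with $i, j \ge 1$, so they index the rows of $M$. Orienting every edge from the $r$-side to the $c$-side, a direct check shows that the $(i, j)$-th row of $M$ is the signed indicator of the fundamental cycle $r_i \to c_j \to r_0 \to c_0 \to r_i$ of the non-tree edge $(i, j)$: the signs $+1, -1, +1, -1$ on the edges $(i, j), (0, j), (0, 0), (i, 0)$ exactly match the column descriptions above.

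Total unimodularity of $M$ then follows from the classical fact that the fundamental cycle matrix of an oriented graph relative to a spanning tree is totally unimodular, or equivalently, that $M$ is a network matrix in the sense of Tutte. The main obstacle is expected to be precisely this identification --- lining up rows, columns, and signs on both sides so that the reduction to a graph-theoretic statement goes through cleanly. Should a self-contained treatment be preferred, one can instead verify TU of $M$ directly via Ghouila-Houri's criterion applied to subsets of the grid $\{1, \ldots, m\} \times \{1, \ldots, n\}$; this amounts to the combinatorial fact that for any such subset the entries can be $\pm 1$-signed so that every row sum, every column sum, and the overall sum lie in $\{-1, 0, 1\}$.
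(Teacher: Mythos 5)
Your proposal is correct and follows essentially the same route as the paper: both identify the coordinate matrix of $X\otimes Y$ (in the basis $\{x_i\otimes y_j\}_{i,j\ge 1}$) with the network/fundamental-cycle matrix of the complete bipartite graph $K_{m+1,n+1}$ relative to the double-star spanning tree of edges through $r_0$ or $c_0$, and then invoke the classical total unimodularity of such matrices. The only cosmetic difference is that the paper first strips off the identity block $X_+\otimes Y_+$ via the lemma that appending $I$ preserves total unimodularity and then transposes, whereas you keep the full $mn\times(m+1)(n+1)$ matrix and read its rows as fundamental cycles; the sign bookkeeping you give checks out.
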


Here, we call a finite subset of a finite dimensional vector space $V$ a maximal circuit, 
if it can be expressed as $B \cup \{ - \sum B \}$ for some basis $B$ of $V$
(we use this terminology, because such a set forms a maximal circuit as a matroid in $V$).

We note that the tensor product of \textit{three} maximal circuits is not necessarily unimodular.
Thus the number \textit{two} is essential in Theorem~\ref{thm:X_otimes_Y_is_unimodular}.
Moreover the process of attributing Theorem~\ref{thm:main} to Theorem~\ref{thm:X_otimes_Y_is_unimodular}
highlights the origin of the special significance of \textit{two} in Theorem~\ref{thm:main}
(see Section~\ref{sec:propositions_on_the_unimodularity} for the detail).

The proof of Theorem~\ref{thm:main} is quite different from those in the previous studies in \cite{Mi} and \cite{Mo}.
The authors consider Theorem~\ref{thm:main} to be interesting in its own right, 
as is its unexpected connection with unimodular systems.

%%%%%%%%%%%%%%%%%%%%%%%%%%%%%%%%%%%%%%%%%%%%%%%%%%%%%%%%%%%%%%%%%%%%%%%%%%%%%%%%%%
%
\section{Unimodular systems}
%%\section{Unimodular system of vectors}
\label{sec:unimodular_systems}
%
%%%%%%%%%%%%%%%%%%%%%%%%%%%%%%%%%%%%%%%%%%%%%%%%%%%%%%%%%%%%%%%%%%%%%%%%%%%%%%%%%
%
The key to the proof of Theorem~\ref{thm:main} is the concept of
a unimodular system of vectors \cite{DG}.
Let $K$ be a field of characteristic $0$, and 
$V$ an $n$-dimensional $K$-vector space.

\begin{proposition} \label{prop:unimodular_system}\slshape
   Let $X$ be a finite subset of~$V$ satisfying $0 \not\in X$ and $\langle X \rangle = V$.
   The following conditions are equivalent:\\
   \textup{(1)} For any basis $B \subset X$, the determinant
   does not depend on~$B$.\\
   \textup{(2)} For any basis $B \subset X$, the set $\mathbb{Z}B$ does not depend on~$B$.\\
   \textup{(3)} $X$ can be identified with the columns of a totally unimodular matrix
   through a linear isomorphism $V \to K^n$.
\end{proposition}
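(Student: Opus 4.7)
My plan is to prove the cycle (1) $\Leftrightarrow$ (2) $\Rightarrow$ (3) $\Rightarrow$ (1). I will fix some basis $B_0 \subset X$, which exists since $\langle X \rangle = V$, and interpret ``the determinant'' in (1) as $|\det|$ computed in $B_0$-coordinates, so that $|\det B_0| = 1$. The direction (3) $\Rightarrow$ (1) will be immediate from the definition of total unimodularity: every basis $B \subset X$ corresponds to a nonsingular $n \times n$ submatrix and hence has $|\det| = 1$. The main content will be (2) $\Rightarrow$ (3), where one must upgrade the single-determinant condition to a statement about \emph{all} square submatrices.

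For (1) $\Leftrightarrow$ (2), I will use a basis-exchange argument. Given bases $B_1, B_2 \subset X$ and any $v \in B_2$, I will write $v = \sum_i c_i b_i$ with $b_i \in B_1$. For each $i$ with $c_i \neq 0$, the set $B_1' := (B_1 \setminus \{b_i\}) \cup \{v\}$ is again a basis contained in $X$, and $|\det B_1'| = |c_i| \cdot |\det B_1|$. Hypothesis (1) then forces $c_i \in \{-1, 0, 1\} \subset \mathbb{Z}$, so $v \in \mathbb{Z}B_1$; ranging over $v \in B_2$ gives $\mathbb{Z}B_2 \subset \mathbb{Z}B_1$, and symmetry yields (2). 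Conversely, (2) says that the change-of-basis matrix between any two bases $B_1, B_2 \subset X$ lies in $GL_n(\mathbb{Z})$ and hence has determinant $\pm 1$, giving (1).

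The nontrivial step (2) $\Rightarrow$ (3) I plan to handle as follows. Identify $V \cong K^n$ via $B_0$; then (2) implies $X \subset \mathbb{Z}B_0 = \mathbb{Z}^n$, because any $x \in X$ extends to a basis $B \subset X$ with $x \in \mathbb{Z}B = \mathbb{Z}B_0$. Let $A$ be the resulting integer matrix whose columns are the elements of $X$. To see $A$ is totally unimodular, I will take a nonsingular $k \times k$ submatrix $M$ indexed by rows $R$ and columns $C \subset X$. Nonsingularity of $M$ is equivalent to the linear independence of $B^* := C \cup \{e_i : i \notin R\}$, and since the $e_i$ lie in $B_0 \subset X$, the set $B^*$ is a basis of $K^n$ contained in $X$. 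After reordering rows so that $R$ comes first and columns so that $C$ comes first, the matrix of $B^*$ in $B_0$-coordinates takes the block-triangular form
\[
   \begin{pmatrix} M & 0 \\ * & I_{n-k} \end{pmatrix},
\]
with determinant $\pm \det M$. Since $|\det B^*| = 1$ by (1), which we have already derived from (2), I will conclude $|\det M| = 1$. The step I expect to be the main obstacle is the observation underlying this last reduction, namely that the identity columns coming from $B_0$ can always be used to complete a linearly independent subset $C \subset X$ to a full basis inside $X$; this is what converts the ``all bases'' condition (1)/(2) into the ``all square submatrices'' condition (3).
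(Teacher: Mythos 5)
The paper itself gives no proof of Proposition~\ref{prop:unimodular_system}; it is stated as background, implicitly deferred to the reference [DG]. So there is nothing to compare against, and your argument should be judged on its own: it is correct and complete. The cycle (1)~$\Leftrightarrow$~(2)~$\Rightarrow$~(3)~$\Rightarrow$~(1) covers all equivalences; the Steinitz-exchange step correctly forces every nonzero coordinate of $v\in B_2$ over $B_1$ to be $\pm 1$ (note this is where $v\in X$ and $B_1\subset X$ are both used, so that the exchanged set is again a basis \emph{contained in $X$}); and the key reduction in (2)~$\Rightarrow$~(3) --- completing a nonsingular $k\times k$ submatrix $M$ on columns $C$ and rows $R$ to the basis $C\cup\{e_i : i\notin R\}\subset X$ and reading off $\det M$ from the block-triangular form --- is exactly the standard mechanism that upgrades the ``all bases'' condition to the ``all square submatrices'' condition. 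Two small cosmetic points: since $K$ is an abstract field of characteristic $0$, you should phrase the determinant comparisons with the paper's sign-classes $[a]=\{a,-a\}$ rather than with $|\cdot|$ (the argument is unchanged: $[c_i\det B_1]=[\det B_1]$ forces $c_i=\pm1$); and in (2)~$\Rightarrow$~(3) it is worth one sentence to observe that no $e_i$ with $i\notin R$ can already lie in $C$, since such a column restricted to the rows $R$ would be zero, contradicting the nonsingularity of $M$ --- as you note, this collision-freeness is automatic.
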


We say that $X$ is a unimodular system of~$V$, if one of these three conditions holds.

To understand condition (1) of Proposition~\ref{prop:unimodular_system},
we need to clarify what is meant by the determinant of~$B$.
We can express $B$ as an $n$ by $n$ matrix through a linear isomorphism $f \colon V \to K^n$,
so that we can define $\det B$ excluding the sign through this correspondence
(an ambiguity of sign caused by the order of the elements of~$B$).
Let us put $[a] = \{ a, -a \}$ 
(this is the equivalent class determined by identifying two scalars equal up to sign).
In this way, we can determine $[\det B]$ relative to~$f$.
Condition (1) of Proposition~\ref{prop:unimodular_system} does not depend on the choice of~$f$.

In condition (3), the term ``a totally unimodular matrix''
is used to refer to a matrix for which the determinant of every square submatrix is $1$, $0$, or $-1$.

\begin{remark}
   The original definition of unimodular system in~\cite{DG}
   does not require the conditions $0 \not\in X$ or $\langle X \rangle = V$.
\end{remark}

The root systems of type A are a typical example of a unimodular system
(in fact, this is a maximal unimodular system).

%%%%%%%%%%%%%%%%%%%%%%%%%%%%%%%%%%%%%%%%%%%%%%%%%%%%%%%%%%%%%%%%%%%%%%%%%%%%%%%%%%
%
\section{Propositions on the unimodularity of vector systems}
\label{sec:propositions_on_the_unimodularity}
%
%%%%%%%%%%%%%%%%%%%%%%%%%%%%%%%%%%%%%%%%%%%%%%%%%%%%%%%%%%%%%%%%%%%%%%%%%%%%%%%%%
%
In this section, we state four propositions 
on the unimodularity of vector systems
(Propositions~\ref{prop:omega_n_is_unimodular}--\ref{prop:X_otimes_Y_is_unimodular}).
The main theorem is reduced to these four propositions sequentially:
\begin{align*}
   \text{Theorem~\ref{thm:main}} 
   &\Leftarrow \text{Proposition~\ref{prop:omega_n_is_unimodular}} \\
   &\Leftarrow \text{Proposition~\ref{prop:Z_n_is_unimodular}} \\
   &\Leftarrow \text{Proposition~\ref{prop:Z_p_otimes_Z_q_is_unimodular}} \\
   &\Leftarrow \text{Proposition~\ref{prop:X_otimes_Y_is_unimodular}}.
\end{align*}

The four propositions are as follows:

\begin{proposition}\label{prop:omega_n_is_unimodular} \slshape
   When $n$ satisfies {\normalfont ($*$)}, the following set is a unimodular system of~$\mathbb{C}^d$:
   \[
      \Omega_n = \left\{ \left.
      \begin{pmatrix} \omega_1^k \\ \vdots \\ \omega_d^k \end{pmatrix}
      \,\right|\,
      k = 0,1,\ldots,n-1
      \right\}.
   \]
\end{proposition}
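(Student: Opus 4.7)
The plan is to translate Proposition~\ref{prop:omega_n_is_unimodular} (a statement about $n$ vectors in $\mathbb{C}^d$) into an equivalent statement about the powers $1, \zeta, \zeta^2, \ldots, \zeta^{n-1}$ regarded as vectors in the $d$-dimensional $\mathbb{Q}$-vector space $\mathbb{Q}[\zeta]$, where $\zeta$ is a fixed primitive $n$th root of unity. This is presumably the content of the next proposition in the chain, Proposition~\ref{prop:Z_n_is_unimodular} (the set $Z_n$), and the reduction is essentially formal.

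First I would set up the identification via Galois embeddings. Let $\sigma_1, \ldots, \sigma_d \colon \mathbb{Q}[\zeta] \hookrightarrow \mathbb{C}$ be the $d$ distinct embeddings of the cyclotomic field, indexed so that $\omega_i = \sigma_i(\zeta)$. The map
\[
   \Psi \colon \mathbb{C} \otimes_{\mathbb{Q}} \mathbb{Q}[\zeta] \longrightarrow \mathbb{C}^d,
   \qquad 1 \otimes \alpha \longmapsto (\sigma_1(\alpha), \ldots, \sigma_d(\alpha)),
\]
is a $\mathbb{C}$-linear isomorphism, because $\mathbb{Q}[\zeta] \cong \mathbb{Q}[x]/(\Phi_n(x))$ and $\Phi_n(x)$ factors over $\mathbb{C}$ into $d$ distinct linear factors. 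Under $\Psi$, the element $\zeta^k$ is sent to the $k$th column of $\Omega_n$, so $\Psi$ identifies $\{1, \zeta, \zeta^2, \ldots, \zeta^{n-1}\} \subset \mathbb{Q}[\zeta]$ with $\Omega_n \subset \mathbb{C}^d$.

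Secondly, I would invoke the invariance of unimodularity under linear isomorphism of the ambient space. By condition~(3) of Proposition~\ref{prop:unimodular_system}, being a unimodular system is intrinsic to a finite subset $X$ of a $K$-vector space: it depends on $X$ only up to $K$-linear isomorphism. Moreover, since a totally unimodular matrix is characterised by its minors lying in $\{0, \pm 1\}$, this property is insensitive to scalar extension from $\mathbb{Q}$ to $\mathbb{C}$. Consequently, $\Omega_n$ is a unimodular system of $\mathbb{C}^d$ if and only if $\{1, \zeta, \ldots, \zeta^{n-1}\}$ is a unimodular system of $\mathbb{Q}[\zeta]$, and the latter will be exactly Proposition~\ref{prop:Z_n_is_unimodular}.

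There is no serious obstacle at this step: the substance of Proposition~\ref{prop:omega_n_is_unimodular} is merely repackaged by Galois theory into a statement about powers of $\zeta$, and the genuine combinatorial content — where the hypothesis $(*)$ is actually used — is pushed downstream into Propositions~\ref{prop:Z_p_otimes_Z_q_is_unimodular} and~\ref{prop:X_otimes_Y_is_unimodular}. The only point deserving explicit verification is that $\Psi$ is a linear isomorphism, which is immediate from the Chinese remainder decomposition $\mathbb{C} \otimes_{\mathbb{Q}} \mathbb{Q}[x]/(\Phi_n(x)) \cong \mathbb{C}^d$.
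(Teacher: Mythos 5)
Your proposal is correct and matches the paper's own reduction: the paper likewise identifies $\Omega_n$ with $Z_n \subset \mathbb{Q}(\zeta_n)$ via a natural linear isomorphism (it uses the first-coordinate projection on the $\mathbb{Q}$-span $\langle \Omega_n \rangle$, which is just the inverse of your Galois-embedding map $\Psi$) and observes that unimodularity is preserved under this identification and under extending scalars from $\mathbb{Q}$ to $\mathbb{C}$. The substance is then deferred to Proposition~\ref{prop:Z_n_is_unimodular}, exactly as you anticipate.
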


\begin{proposition}\label{prop:Z_n_is_unimodular} \slshape
   When $n$ satisfies {\normalfont ($*$)}, the following set is a unimodular system of~$\mathbb{Q}(\zeta_n)$:
   \[
      Z_n = \big\{ z \in \mathbb{C} \,\big|\, z^n = 1 \big\}.
   \]
\end{proposition}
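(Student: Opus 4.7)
The plan is to reduce $Z_n$ to the setting of Proposition~\ref{prop:Z_p_otimes_Z_q_is_unimodular} by peeling off tensor factors whose presence preserves unimodularity. Write $n=2^a p^b q^c$ with $p,q$ distinct odd primes, allowing any of $a$, $b$, $c$ to be zero. First, the Chinese Remainder Theorem for cyclotomic fields identifies $\mathbb{Q}(\zeta_n)$ with $\mathbb{Q}(\zeta_{2^a})\otimes_{\mathbb{Q}}\mathbb{Q}(\zeta_{p^b})\otimes_{\mathbb{Q}}\mathbb{Q}(\zeta_{q^c})$ via multiplication, and carries $Z_n$ onto the set of elementary tensors $Z_{2^a}\otimes Z_{p^b}\otimes Z_{q^c}$. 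Each prime-power factor admits a further decomposition: the identity $\zeta_{\ell^k}^{\,\ell^{k-1}j+i}=\zeta_{\ell}^{\,j}\,\zeta_{\ell^k}^{\,i}$ exhibits $\mathbb{Q}(\zeta_{\ell^k})$ as $\mathbb{Q}(\zeta_{\ell})\otimes_{\mathbb{Q}} U_{\ell,k}$, where $U_{\ell,k}$ is the $\mathbb{Q}$-span of the basis $B_{\ell,k}=\{1,\zeta_{\ell^k},\ldots,\zeta_{\ell^k}^{\ell^{k-1}-1}\}$, and carries $Z_{\ell^k}$ onto $Z_{\ell}\otimes B_{\ell,k}$. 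Combining these identifications and reordering factors yields
\[
   Z_n \;\cong\; Z_2 \,\otimes\, (Z_p\otimes Z_q) \,\otimes\, B^{\ast},
\]
where $B^{\ast}=B_{2,a}\otimes B_{p,b}\otimes B_{q,c}$ is a basis of a tensor product of auxiliary $\mathbb{Q}$-vector spaces.

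The proof is then completed by two elementary stability lemmas about unimodular systems. (A) If $X\subset V$ is unimodular and $B$ is a basis of a second space $W$, then $X\otimes B\subset V\otimes W$ is unimodular, because in the obvious basis of $V\otimes W$ the associated totally unimodular matrix becomes block diagonal with $|B|$ copies of the matrix for $X$. (B) If $X\subset V$ is unimodular, then so is $\{1,-1\}\otimes X=X\cup(-X)\subset V$, because appending negatives of the columns of a totally unimodular matrix preserves total unimodularity. By Proposition~\ref{prop:Z_p_otimes_Z_q_is_unimodular}, $Z_p\otimes Z_q$ is unimodular; applying (A) with the basis $B^{\ast}$ and then (B) with $Z_2$ gives the conclusion. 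The degenerate cases $a=0$, $b=0$, or $c=0$ drop the corresponding factor and follow from the same lemmas applied to a shorter chain.

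I expect the conceptual heart of the argument to be the prime-power decomposition $Z_{\ell^k}\cong Z_{\ell}\otimes B_{\ell,k}$: it identifies the ``novelty'' of $\zeta_{\ell^k}$ beyond $\zeta_{\ell}$ as an independent basis factor carrying no new relations, which is what lets one reduce all prime powers back to bare primes before invoking Proposition~\ref{prop:Z_p_otimes_Z_q_is_unimodular}. The main bookkeeping subtlety is to state lemma~(B) so as to cleanly absorb the degenerate case $a=1$, where $\mathbb{Q}(\zeta_{2})=\mathbb{Q}$ collapses and the ``$Z_2$ factor'' must be read as adjoining negated columns inside a single copy of $\mathbb{Q}(\zeta_{n/2})$ rather than as a new tensor dimension; lemma~(B) is phrased precisely to handle this collapse.
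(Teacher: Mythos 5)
Your proof is correct and follows essentially the same route as the paper: a CRT splitting of $\mathbb{Q}(\zeta_n)$ into prime-power factors, a reduction of each prime power to the bare prime (your $Z_{\ell^k}\cong Z_\ell\otimes B_{\ell,k}$ is exactly the paper's isomorphism $z^{(j)}\mapsto\zeta_{\ell^k}^{j}z$ onto a $\ell^{k-1}$-fold direct sum, with ``tensor with a distinguished basis'' replacing ``disjoint union over a direct sum''), the $\{1,-1\}\otimes X$ lemma to absorb the factor of $2$, and finally Proposition~\ref{prop:Z_p_otimes_Z_q_is_unimodular}. Your stability lemmas (A) and (B) are the paper's Lemmas on direct sums and on $\{1,-1\}\otimes X$, and both are correctly justified.
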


Here, $\zeta_n$ is a primitive $n$th root of unity,
and we regard the cyclotomic field $\mathbb{Q}(\zeta_n)$ as a $d$-dimensional $\mathbb{Q}$-vector space.

\begin{proposition}\label{prop:Z_p_otimes_Z_q_is_unimodular} \slshape
   \textup{(1)} If $p$ is an odd prime, $Z_p$ is a unimodular system of~$\mathbb{Q}(\zeta_p)$. \\
   \textup{(2)} If $p$ and $q$ are odd primes,
   \[
      Z_p \otimes Z_q = \big\{ x \otimes y \,\big|\, x \in Z_p, \,\, y \in Z_q \big\}
   \]
   is a unimodular system of~$\mathbb{Q}(\zeta_p) \otimes \mathbb{Q}(\zeta_q)$,
   where ``$\otimes$'' means the tensor product of two $\mathbb{Q}$-vector spaces. 
\end{proposition}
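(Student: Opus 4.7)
The strategy is to recognize $Z_p$ (and $Z_q$) as a \emph{maximal circuit} in the sense of the introduction and then invoke Proposition~\ref{prop:X_otimes_Y_is_unimodular}.

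For part~(1), the starting point is that because $\Phi_p(x) = 1 + x + \cdots + x^{p-1}$ is the minimal polynomial of $\zeta_p$ over $\mathbb{Q}$, the set $B_p := \{1, \zeta_p, \zeta_p^2, \ldots, \zeta_p^{p-2}\}$ is a $\mathbb{Q}$-basis of $\mathbb{Q}(\zeta_p)$, and the identity $\Phi_p(\zeta_p) = 0$ rearranges to $\zeta_p^{p-1} = -{\textstyle\sum} B_p$. Consequently
\[
   Z_p \;=\; B_p \cup \{-{\textstyle\sum} B_p\},
\]
which exhibits $Z_p$ as a maximal circuit of $\mathbb{Q}(\zeta_p)$. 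It then suffices to verify that every maximal circuit is a unimodular system. I would do this directly via criterion~(1) of Proposition~\ref{prop:unimodular_system}: the only bases extractable from $\{v_1, \ldots, v_n, w\}$ with $w = -v_1 - \cdots - v_n$ are $B := \{v_1, \ldots, v_n\}$ itself and, for each $j$, $B_j := (B \setminus \{v_j\}) \cup \{w\}$; multilinearity of the determinant together with $w + v_1 + \cdots + v_n = 0$ yields $[\det B_j] = [\det B]$ in one short computation.

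For part~(2), the same identification applied to $q$ shows that $Z_q = B_q \cup \{-{\textstyle\sum} B_q\}$ is a maximal circuit of $\mathbb{Q}(\zeta_q)$. Proposition~\ref{prop:X_otimes_Y_is_unimodular}~(2)---equivalently Theorem~\ref{thm:X_otimes_Y_is_unimodular}, which states that the tensor product of two maximal circuits is a unimodular system---then applies verbatim with $X = Z_p$ and $Y = Z_q$ to conclude that $Z_p \otimes Z_q$ is a unimodular system of $\mathbb{Q}(\zeta_p) \otimes \mathbb{Q}(\zeta_q)$.

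The hard part of the development as a whole is not located in this proposition at all but in proving Theorem~\ref{thm:X_otimes_Y_is_unimodular}; once that deep fact is granted, Proposition~\ref{prop:Z_p_otimes_Z_q_is_unimodular} reduces to the identification of $Z_p$ as a maximal circuit, and this is immediate from the cyclotomic relation $1 + \zeta_p + \cdots + \zeta_p^{p-1} = 0$. I note that the oddness of $p$ and $q$ plays no role in the unimodularity arguments used here---it enters the overall strategy later, in the reduction from Proposition~\ref{prop:omega_n_is_unimodular} to the hypothesis~($*$) of the main theorem.
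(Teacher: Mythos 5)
Your proposal is correct and follows essentially the same route as the paper, which likewise disposes of this proposition by observing that $Z_p$ is a maximal circuit of $\mathbb{Q}(\zeta_p)$ (via $1+\zeta_p+\cdots+\zeta_p^{p-1}=0$) and then invoking Proposition~\ref{prop:X_otimes_Y_is_unimodular}. The only cosmetic difference is that you verify part (1) by a direct multilinearity computation of the determinants of the $n+1$ candidate bases, whereas the paper deduces it from the total unimodularity of the matrix $(-\mathbf{1}\;\, I)$; both are valid and equally short.
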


\begin{proposition}\label{prop:X_otimes_Y_is_unimodular} \slshape
    \textup{(1)} 
    If $X$ is a maximal circuit of~$V$, then $X$ is a unimodular system of~$V$. \\
    \textup{(2)} 
    If $X$ and $Y$ are maximal circuits of~$V$ and $W$, respectively,
    then
   \[
      X \otimes Y = \big\{ x \otimes y \,\big|\, x \in X, \,\, y \in Y \big\}
   \]
   is a unimodular system of~$V \otimes W$.
\end{proposition}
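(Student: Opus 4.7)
The plan is to verify condition~(3) of Proposition~\ref{prop:unimodular_system} for both parts by showing that the coordinate matrix is totally unimodular (TU). Write $X = \{v_0, v_1, \ldots, v_m\}$ with $\{v_1, \ldots, v_m\}$ a basis of $V$ and $v_0 = -(v_1 + \cdots + v_m)$, and analogously $Y = \{w_0, w_1, \ldots, w_n\}$. Part~(1) is a direct computation: in the basis $\{v_1, \ldots, v_m\}$, the coordinate matrix of $X$ is $[I_m \mid -\mathbf{1}]$, and a cofactor expansion along the $-\mathbf{1}$ column (when it appears) shows that every square submatrix has determinant in $\{0, \pm 1\}$.

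For Part~(2), in the basis $\{v_k \otimes w_l : 1 \leq k \leq m,\ 1 \leq l \leq n\}$ of $V \otimes W$, the $(m+1)(n+1)$ coordinate vectors of the elements of $X \otimes Y$ split into four families: the standard basis vectors $e_{(k,l)}$ for $1 \leq k \leq m$ and $1 \leq l \leq n$; the negated row-sums $-\sum_l e_{(k,l)}$, one per $k$; the negated column-sums $-\sum_k e_{(k,l)}$, one per $l$; and the single all-ones vector $\sum_{k,l} e_{(k,l)}$. Flipping column signs (which preserves TU), the coordinate matrix takes the form $[I_{mn} \mid M']$; since a block $[I \mid B]$ is TU if and only if $B$ is, the problem reduces to the TU of $(M')^T$. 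The latter is an $(m+n+1) \times mn$ matrix whose rows are the characteristic vectors of the rows, of the columns, and of the whole of the grid $\{1, \ldots, m\} \times \{1, \ldots, n\}$: it is precisely the (unoriented) vertex-edge incidence matrix of $K_{m,n}$ augmented by one extra all-ones row.

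The core step is therefore the lemma: \emph{the unoriented vertex-edge incidence matrix of any bipartite graph, augmented by an additional all-ones row, is totally unimodular.} The plan to prove it runs as follows. A square submatrix $S$ omitting the all-ones row is a minor of a bipartite incidence matrix, hence TU by classical network theory. If $S$ contains the all-ones row, let $R_S \subseteq A \cup B$ be the set of vertices whose incidence rows appear in $S$ (where $A \sqcup B$ is the bipartition); using the identity $\mathbf{1} = \sum_{a \in A} r_a$, with $r_v$ the incidence row of $v$, perform the determinant-preserving row operation that replaces the all-ones row in $S$ by $\sum_{a \in A \setminus R_S} r_a$. After flipping the signs of the $B$-side rows in the resulting submatrix, one obtains the signed incidence matrix of a bipartite graph with possibly some ``dangling'' edges missing one endpoint, and this is TU. The main obstacle will be the careful bookkeeping of this reduction --- tracking the sign changes and verifying that columns whose edges have both endpoints outside $R_S$ collapse to zero columns contributing nothing to the determinant.
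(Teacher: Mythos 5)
Your proposal is correct, and its skeleton coincides with the paper's: the same splitting of the coordinate matrix into an identity block $I_{mn}$ plus the $m+n+1$ remaining columns, the same reduction via the fact that $[I\mid B]$ is totally unimodular iff $B$ is, and the same identification of the transposed residual matrix with the incidence matrix of $K_{m,n}$ augmented by an all-ones row (up to sign flips). Where you diverge is in the last step: the paper recognizes this $(m+n+1)\times mn$ matrix as the network matrix of the ``double star'' tree on $\{e_0,f_0\}\cup\{e_i\}\cup\{f_j\}$ against the complete bipartite graph, and simply cites the theorem that network matrices are totally unimodular; you instead prove the needed total unimodularity by hand, absorbing the all-ones row via the identity $\mathbf{1}=\sum_{a\in A}r_a$ and a determinant-preserving row operation, which amounts to contracting $A\setminus R_S$ to a single vertex so that the resulting matrix is again (a submatrix of) a bipartite incidence matrix. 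Your route is self-contained and slightly more general (it works for the augmented incidence matrix of \emph{any} bipartite graph, not just $K_{m,n}$), at the cost of the bookkeeping you anticipate; the paper's route is shorter because it outsources exactly that bookkeeping to the standard network-matrix theorem. One small correction to your plan: a column whose edge has both endpoints outside $R_S$ does \emph{not} collapse to a zero column after your row operation --- its $A$-endpoint lies in $A\setminus R_S$, so the column becomes the unit vector supported on the aggregated row. This is harmless (a column with a single $\pm1$ entry is a ``dangling edge'' and preserves total unimodularity, and two such columns force determinant $0$), but the verification should be phrased that way rather than via vanishing columns.
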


As will be discussed later, Proposition~\ref{prop:X_otimes_Y_is_unimodular} (1) is almost trivial.
Furthermore Proposition~\ref{prop:X_otimes_Y_is_unimodular} (2) coincides with Theorem~\ref{thm:X_otimes_Y_is_unimodular}.
Therefore, the main theorem is reduced to Theorem~\ref{thm:X_otimes_Y_is_unimodular}.

Here, we define the concept of a maximal circuit as follows:

\begin{definition}
   For a finite subset $X$ of a finite dimensional $\mathbb{Q}$-vector space $V$,
   we say that $X$ is a maximal circuit of~$V$,
   when the following conditions hold:
   \[
      |X| = \dim(V) + 1, \qquad
      \langle X \rangle = V, \qquad
      \sum X = 0.
   \]
   Here, we put $\sum X = \sum_{x \in X} x$.
\end{definition}

For example, if $B$ is a basis of~$V$,
$B \cup \{ -\sum B \}$ is a maximal circuit of~$V$.
Conversely, any maximal circuit of~$V$ can be expressed in this form.
Hence, any two maximal circuits of~$V$ are interchanged by a linear automorphism.

\begin{remark}
The tensor product of \textit{three} maximal circuits is not necessarily unimodular.
Indeed we assume that $X_1, X_2, X_3$ are maximal circuits of $V_1, V_2, V_3$, respectively.
When $\dim V_1 = 2$, $\dim V_2 = 4$, $\dim V_3 = 6$, $X_1 \otimes  X_2 \otimes X_3$ is not unimodular.
This fact corresponds to the fact that $105 = 3 \cdot 5 \cdot 7 = (2+1)(4+1)(6+1)$ is 
the smallest $n$ for which the property ``all coefficients of $\Phi_n(x)$ are 1, 0 , or $-1$'' does not hold.
\end{remark}

Moreover, when $p$ is an odd prime,
$Z_p$ is a maximal circuit of the $\mathbb{Q}$-vector space $\mathbb{Q}(\zeta_p)$.
Hence, Proposition~\ref{prop:X_otimes_Y_is_unimodular}
is a generalization of Proposition~\ref{prop:Z_p_otimes_Z_q_is_unimodular}.

In the next section,
we will explain the reduction of the main theorem to
Theorem~\ref{thm:X_otimes_Y_is_unimodular}
via Propositions~\ref{prop:omega_n_is_unimodular}--\ref{prop:X_otimes_Y_is_unimodular}.
Moreover, we will prove Theorem~\ref{thm:X_otimes_Y_is_unimodular}
in Section~\ref{sec:proof_of_the_last_Prop}.

%%%%%%%%%%%%%%%%%%%%%%%%%%%%%%%%%%%%%%%%%%%%%%%%%%%%%%%%%%%%%%%%%%%%%%%%%%%%%%%%%%
%
\section{Reduction of the main theorem to Theorem~\ref{thm:X_otimes_Y_is_unimodular}}
%
%%%%%%%%%%%%%%%%%%%%%%%%%%%%%%%%%%%%%%%%%%%%%%%%%%%%%%%%%%%%%%%%%%%%%%%%%%%%%%%%%
%
In this section,
we explain the reduction of the main theorem to
Theorem~\ref{thm:X_otimes_Y_is_unimodular}
via propositions stated in the previous section.

%%%%%%%%%%%%%%%%%%%%%%%%%%%%%%%%%%%%%%%%%%%%%%%%%%%%%%%%%%%%%%%%%%%%%%%%%%%%%%%%%%
\subsection{Theorem~\ref{thm:main} $\Leftarrow$ Proposition~\ref{prop:omega_n_is_unimodular}}
%%%%%%%%%%%%%%%%%%%%%%%%%%%%%%%%%%%%%%%%%%%%%%%%%%%%%%%%%%%%%%%%%%%%%%%%%%%%%%%%%
%
First, Theorem~\ref{thm:main} is reduced to Proposition~\ref{prop:omega_n_is_unimodular}.
Indeed, using Proposition~\ref{prop:omega_n_is_unimodular},
we can prove Theorem~\ref{thm:main} as follows.

The Schur polynomial $s_{\lambda}$ is expressed as
\[
   s_{\lambda}(x_1,\ldots,x_d) = a_{\delta + \lambda}(x_1,\ldots,x_d) / a_{\delta}(x_1,\ldots,x_d).
\]
Here, $a_{\mu}$ is a Vandermonde type determinant defined by
\[
   a_{\mu}(x_1,\ldots,x_d) = \det(x_i^{\mu_j})_{1 \leq i,j \leq d}
\]
for~$\mu = (\mu_1,\ldots,\mu_d) \in \mathbb{Z}_{\geq 0}^d$.
Moreover, we put 
\[
   \delta = (d-1,d-2,\ldots,1,0).
\]
For any $\mu \in \mathbb{Z}_{\geq 0}^d$,
there exist $v_1,\ldots,v_d \in \Omega_n$ such that
\[
   a_{\mu}(\omega_1,\ldots,\omega_d) = \det(v_1,\ldots,v_d).
\]
By Proposition~\ref{prop:omega_n_is_unimodular},
when $n$ satisfies ($*$),
there exists a nonzero complex number $a$ satisfying
\[
   \big\{ a_{\mu}(\omega_1,\ldots,\omega_d) \,\big|\, \mu \in \mathbb{Z}_{\geq 0}^d \big\}
   = \big\{ \det(v_1,\ldots,v_d) \,\big|\, v_1,\ldots,v_d \in \Omega_n \big\} 
   = \{ a,0,-a \}.
\]
Moreover, we see $a_{\delta}(\omega_1,\ldots,\omega_d) \ne 0$ easily.
Theorem~\ref{thm:main} is immediate from this.

%%%%%%%%%%%%%%%%%%%%%%%%%%%%%%%%%%%%%%%%%%%%%%%%%%%%%%%%%%%%%%%%%%%%%%%%%%%%%%%%%%
\subsection{Proposition~\ref{prop:omega_n_is_unimodular} $\Leftarrow$ Proposition~\ref{prop:Z_n_is_unimodular}}
%%%%%%%%%%%%%%%%%%%%%%%%%%%%%%%%%%%%%%%%%%%%%%%%%%%%%%%%%%%%%%%%%%%%%%%%%%%%%%%%%
%
Proposition~\ref{prop:omega_n_is_unimodular}
can be reduced to Proposition~\ref{prop:Z_n_is_unimodular}
through a natural linear isomorphism as follows.

To prove Proposition~\ref{prop:omega_n_is_unimodular},
it suffices to show that $\Omega_n$ is a unimodular system of~$\langle \Omega_n \rangle$,
the $\mathbb{Q}$-vector space generated by~$\Omega_n$.
We note that $\langle \Omega_n \rangle$
is isomorphic to the cyclotomic field $\mathbb{Q}(\zeta_n)$
(as $\mathbb{Q}$-vector spaces) through the correspondence
\[
   \begin{pmatrix} z_1 \\ \vdots \\ z_d \end{pmatrix} \mapsto z_1.
\]
Moreover, $\Omega_n$ is identified with~$Z_n$ through this isomorphism.
Thus, Proposition~\ref{prop:omega_n_is_unimodular}
is equivalent to Proposition~\ref{prop:Z_n_is_unimodular}.

%%%%%%%%%%%%%%%%%%%%%%%%%%%%%%%%%%%%%%%%%%%%%%%%%%%%%%%%%%%%%%%%%%%%%%%%%%%%%%%%%%
\subsection{Proposition~\ref{prop:Z_n_is_unimodular} $\Leftarrow$ Proposition~\ref{prop:Z_p_otimes_Z_q_is_unimodular}}
%%%%%%%%%%%%%%%%%%%%%%%%%%%%%%%%%%%%%%%%%%%%%%%%%%%%%%%%%%%%%%%%%%%%%%%%%%%%%%%%%
%
Proposition~\ref{prop:Z_n_is_unimodular} can also be reduced to
Proposition~\ref{prop:Z_p_otimes_Z_q_is_unimodular}
through a natural linear isomorphism.

First we note the following lemma:

\begin{lemma} \slshape
   When $X$ and $Y$ are unimodular systems of~$V$ and $W$, respectively,
   $X \sqcup Y$ is a unimodular system of~$V \oplus W$.
\end{lemma}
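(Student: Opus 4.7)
The plan is to verify that $X \sqcup Y \subset V \oplus W$ satisfies characterization (3) of Proposition~\ref{prop:unimodular_system}, namely that it arises as the columns of a totally unimodular matrix after a suitable choice of coordinates.

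First I would invoke the unimodularity of $X$ and $Y$ to fix linear isomorphisms $V \to K^n$ and $W \to K^m$ under which $X$ becomes the columns of a totally unimodular matrix $A$, and $Y$ the columns of a totally unimodular matrix $B$. Taking their direct sum yields a linear isomorphism $V \oplus W \to K^{n+m}$, and under this identification $X \sqcup Y$ (with $X$ sitting in the first summand and $Y$ in the second) corresponds precisely to the columns of the block diagonal matrix
\[
   M = \begin{pmatrix} A & 0 \\ 0 & B \end{pmatrix}.
\]
So it suffices to prove that $M$ is totally unimodular.

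For this, I would consider an arbitrary $r \times r$ submatrix $M'$ of $M$, formed by selecting $r_1$ rows from the top row-block and $r_2$ from the bottom ($r_1 + r_2 = r$), and $c_1$ columns from the left column-block and $c_2$ from the right ($c_1 + c_2 = r$). Then $M'$ has a block form with $A'$ an $r_1 \times c_1$ submatrix of $A$ in one corner, $B'$ an $r_2 \times c_2$ submatrix of $B$ in the opposite corner, and zeros elsewhere. When $r_1 = c_1$ (and hence $r_2 = c_2$), both $A'$ and $B'$ are square and $\det M' = \det A' \cdot \det B' \in \{0, \pm 1\}$ by the total unimodularity of $A$ and $B$. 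When $r_1 \ne c_1$, say $r_1 > c_1$, the $r_1$ rows from the top row-block of $M'$ have nonzero entries confined to the $c_1 < r_1$ columns of $A'$, so they are linearly dependent and $\det M' = 0$.

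There is no genuine obstacle here; the lemma is a formal consequence of Proposition~\ref{prop:unimodular_system} together with the easy observation that a block-diagonal combination of two totally unimodular matrices is itself totally unimodular.
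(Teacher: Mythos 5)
Your proof is correct. The paper states this lemma without proof, so there is nothing to compare against; your argument via the block-diagonal matrix $\left(\begin{smallmatrix} A & 0 \\ 0 & B \end{smallmatrix}\right)$ and the case analysis on how a square submatrix meets the two blocks is the standard and complete way to establish it.
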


Next, 
when we have $n = p_1^{l_1} \cdots p_k^{l_k}$
where $p_1,\ldots,p_k$ are distinct primes,
we can identify $\mathbb{Q}(\zeta_n)$ with
the $p_1^{l_1 - 1} \cdots p_k^{l_k - 1}$-fold direct sum of
\[
   \mathbb{Q}(\zeta_{p_1}) \otimes \cdots \otimes \mathbb{Q}(\zeta_{p_k})
\]
as $\mathbb{Q}$-vector spaces.
Moreover, through this isomorphism, 
we can identify $Z_n$ with
the $p_1^{l_1 - 1} \cdots p_k^{l_k - 1}$-fold disjoint sum of
\[
   Z_{p_1} \otimes \cdots \otimes Z_{p_k}.
\]
This follows from (1) and (2) of the following lemma:

\begin{lemma} \slshape
   \textup{(1)} 
   When natural numbers $a$ and $b$ are coprime,
   there exists a linear isomorphism
   $\mathbb{Q}(\zeta_{ab}) \to \mathbb{Q}(\zeta_a) \otimes \mathbb{Q}(\zeta_b)$
   such that the image of~$Z_{ab}$ is equal to~$Z_a \otimes Z_b$. \\
   \textup{(2)}
   For any prime $p$,
   there exists a linear isomorphism 
   $\mathbb{Q}(\zeta_{p^l}) \to \mathbb{Q}(\zeta_p)^{\oplus p^{l-1}}$ such that
   the image of~$Z_{p^l}$ is equal to the $p^{l-1}$-fold disjoint sum of~$Z_p$.
\end{lemma}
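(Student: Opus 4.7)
The plan is to construct each isomorphism explicitly from a natural algebraic map and then track the image of the distinguished subset combinatorially.

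For part (1), I would work with the multiplication map
\[
   m\colon \mathbb{Q}(\zeta_a) \otimes_{\mathbb{Q}} \mathbb{Q}(\zeta_b) \to \mathbb{Q}(\zeta_{ab}), \qquad x \otimes y \mapsto xy,
\]
where I fix $\zeta_a = \zeta_{ab}^{b}$ and $\zeta_b = \zeta_{ab}^{a}$ inside $\mathbb{Q}(\zeta_{ab})$ (both are primitive of the required order because $\gcd(a,b)=1$). Both sides have $\mathbb{Q}$-dimension $\varphi(a)\varphi(b) = \varphi(ab)$; the image of $m$ contains $\zeta_a\zeta_b = \zeta_{ab}^{a+b}$, which is a primitive $ab$-th root of unity, so $m$ is surjective and hence a $\mathbb{Q}$-linear isomorphism. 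Since $m(\zeta_a^i \otimes \zeta_b^j) = \zeta_{ab}^{bi+aj}$ and the Chinese Remainder Theorem makes $(i,j) \mapsto bi+aj \pmod{ab}$ a bijection from $\{0,\ldots,a-1\}\times\{0,\ldots,b-1\}$ onto $\mathbb{Z}/ab\mathbb{Z}$, we get $m(Z_a \otimes Z_b) = Z_{ab}$, and $\phi := m^{-1}$ is the required map.

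For part (2), the key input is $\Phi_{p^l}(x) = \Phi_p(x^{p^{l-1}})$, which, after multiplication by $\zeta_{p^l}^{\,i}$, yields the relations
\[
   \sum_{j=0}^{p-1} \zeta_{p^l}^{\,i + j p^{l-1}} = 0, \qquad i = 0, 1, \ldots, p^{l-1}-1,
\]
among the $p^l$ elements of $Z_{p^l}$. I would then define the $\mathbb{Q}$-linear map
\[
   \phi\colon \mathbb{Q}(\zeta_{p^l}) \to \mathbb{Q}(\zeta_p)^{\oplus p^{l-1}}, \qquad \zeta_{p^l}^{\,i + j p^{l-1}} \longmapsto (0,\ldots,0,\, \zeta_p^{\,j},\, 0,\ldots,0),
\]
with $\zeta_p^{\,j}$ in the $i$-th summand, for $i\in\{0,\ldots,p^{l-1}-1\}$ and $j \in \{0,\ldots,p-1\}$. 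These relations are sent to $\sum_{j=0}^{p-1}\zeta_p^{\,j} = 0$ in each summand, so $\phi$ is well defined; the relations have disjoint supports (so are linearly independent), and their count $p^{l-1}$ equals $p^l - \varphi(p^l)$, so they span every $\mathbb{Q}$-linear relation among $Z_{p^l}$. Both sides now have dimension $\varphi(p^l) = p^{l-1}(p-1)$, and $\phi$ is plainly surjective, hence an isomorphism. By construction $\phi(Z_{p^l})$ is the disjoint union of $p^{l-1}$ copies of $Z_p$, one per summand.

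The only genuinely delicate step is the dimension-counting verification in part (2) that the coset-sum relations coming from $\Phi_p(x^{p^{l-1}})$ already account for \emph{all} $\mathbb{Q}$-relations among $Z_{p^l}$; once this is in hand, everything reduces to bookkeeping, and part (1) is essentially a direct application of the Chinese Remainder Theorem.
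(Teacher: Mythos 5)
Your proposal is correct and uses essentially the same two isomorphisms as the paper: the multiplication map $z \otimes w \mapsto zw$ for part (1), and for part (2) the identification of $\zeta_{p^l}^{\,i+jp^{l-1}}$ with $\zeta_p^{\,j}$ placed in the $i$-th summand. The only difference is one of direction in (2): you build the map out of $\mathbb{Q}(\zeta_{p^l})$, which obliges you to verify well-definedness by counting the relations among $Z_{p^l}$ (your count is right), whereas the paper defines the inverse map $z^{(j)} \mapsto \zeta_{p^l}^{\,j} z$ on the direct sum, where well-definedness is automatic and only surjectivity plus equal dimensions need be checked.
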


\begin{proof}[Proof.]
(1) We consider the following correspondence:
\[
   \mathbb{Q}(\zeta_a) \otimes \mathbb{Q}(\zeta_b) \to \mathbb{Q}(\zeta_{ab}), \qquad
   z \otimes w \mapsto zw.
\] 
This gives a linear isomorphism,
and the image of~$Z_a \otimes Z_b$ is equal to~$Z_{ab}$. \\
(2) Let $\mathbb{Q}(\zeta_p)^{(j)}$ denote a copy of~$\mathbb{Q}(\zeta_p)$
for~$j \in 0,1,\ldots, p^{l-1}-1$.
Moreover, we denote by~$z^{(j)}$ the counterpart of~$z \in \mathbb{Q}(\zeta_p)$ in~$\mathbb{Q}(\zeta_p)^{(j)}$.
Let us consider the following correspondence:
\[
   \bigoplus_{j=0}^{p^{l-1}-1}\mathbb{Q}(\zeta_p)^{(j)}
   \to \mathbb{Q}(\zeta_{p^l}), \qquad
   z^{(j)}\mapsto \zeta_{p^l}^j z.
\]
This gives a linear isomorphism
(it suffices to show the surjectiveness because the dimensions are equal).
It is obvious that the image of~$\bigsqcup_{j=0}^{p^{l-1}-1}Z_p^{(j)}$ is equal to~$Z_{p^l}$.
\end{proof}

Thus, we have the following isomorphism,
because $\mathbb{Q}(\zeta_2) = \mathbb{Q}$: 
\[
	\mathbb{Q}(\zeta_n) \simeq
	\begin{cases}
		\bigoplus_{i\in \Lambda}\mathbb{Q}^{(i)}
		\ (\textup{where }|\Lambda|=2^{k-1}), &n=2^k, \\[5pt]
		\bigoplus_{i\in \Lambda}\mathbb{Q}(\zeta_p)^{(i)}
		\ (\textup{where }|\Lambda|=p^{l-1}), &n=p^l, \\[5pt]
		\bigoplus_{i\in \Lambda}\mathbb{Q}(\zeta_p)^{(i)}
		\ (\textup{where }|\Lambda|=2^{k-1}p^{l-1}), &n=2^kp^l, \\[5pt]
		\bigoplus_{i\in\Lambda}(\mathbb{Q}(\zeta_p)\otimes \mathbb{Q}(\zeta_q))^{(i)}
		\ (\textup{where }|\Lambda|=p^{l-1}q^{m-1}), &n=p^lq^m, \\[5pt]
		\bigoplus_{i\in\Lambda}(\mathbb{Q}(\zeta_p)\otimes \mathbb{Q}(\zeta_q))^{(i)}
		\ (\textup{where }|\Lambda|=2^{k-1}p^{l-1}q^{m-1}), &n=2^kp^lq^m.
	\end{cases}
\]
Here, $p$ and $q$ are distinct odd primes.
Through this isomorphism $f$, we can write the image $f(Z_n)$ as
\[
	f(Z_n) =
	\begin{cases}
		\bigsqcup_{i\in\Lambda}Z_2^{(i)}, &n=2^k, \\[5pt]
		\bigsqcup_{i\in\Lambda}Z_p^{(i)}, &n=p^l, \\[5pt]
		\bigsqcup_{i\in \Lambda}(Z_2 \otimes Z_p)^{(i)}, &n=2^kp^l, \\[5pt]
		\bigsqcup_{i\in\Lambda}(Z_p\otimes Z_q)^{(i)}, &n=p^lq^m, \\[5pt]
		\bigsqcup_{i\in\Lambda}(Z_2 \otimes Z_p\otimes Z_q)^{(i)}, &n=2^kp^lq^m.
	\end{cases}
\]
Thus, we see that 
Proposition~\ref{prop:Z_n_is_unimodular} follows from
Proposition~\ref{prop:Z_p_otimes_Z_q_is_unimodular}.
Indeed, we have $Z_2 = \{ 1,-1 \}$ and the following lemma:

\begin{lemma} \slshape
   If $X$ is a unimodular system of~$V$,
   then $\{ 1,-1\} \otimes X$ is also a unimodular system of~$V$.
\end{lemma}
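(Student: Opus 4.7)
My plan is to verify condition (1) of Proposition~\ref{prop:unimodular_system} directly, which is the most natural characterization for this purpose. First, I would unfold the notation: under the canonical isomorphism $\mathbb{Q} \otimes V \cong V$ (recall $\{1,-1\} \subset \mathbb{Q}$), the set $\{1,-1\} \otimes X$ is identified with $X \cup (-X) \subset V$. The hypotheses $0 \notin X$ and $\langle X \rangle = V$ then carry over verbatim to $X \cup (-X)$, so only the determinant condition remains.

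Next, I would take an arbitrary basis $B \subset X \cup (-X)$ of~$V$. The key observation is that $B$ cannot contain both $x$ and $-x$ for any $x \in X$, since those two vectors are linearly dependent. Writing $n = \dim V$, this lets me express $B = \{\epsilon_1 x_1, \ldots, \epsilon_n x_n\}$ with distinct $x_1, \ldots, x_n \in X$ and signs $\epsilon_i \in \{\pm 1\}$. Then $B' = \{x_1, \ldots, x_n\}$ is a basis of~$V$ contained in~$X$, and multilinearity of the determinant yields $\det B = \bigl(\prod_i \epsilon_i\bigr) \det B'$, so $[\det B] = [\det B']$. Since $X$ is unimodular, $[\det B']$ is independent of the choice of~$B'$, and hence $[\det B]$ is independent of~$B$. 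This verifies condition (1) and completes the proof.

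An equivalent, perhaps more transparent route uses condition (3): realize $X$ as the set of columns of a totally unimodular matrix $M$, so that $\{1,-1\} \otimes X$ corresponds to the columns of the augmented matrix $[M \mid -M]$. For any square submatrix $N$ of $[M \mid -M]$, either $N$ contains two columns that differ only by a sign (forcing $\det N = 0$), or $N$ arises from a square submatrix $N'$ of~$M$ by negating some columns, giving $\det N = \pm \det N' \in \{-1,0,1\}$.

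I do not anticipate any real obstacle here; this is by far the most routine of the four steps in the reduction chain, amounting to the standard fact that total unimodularity is preserved under signed column duplication. The substantive content of the paper lies elsewhere, in Theorem~\ref{thm:X_otimes_Y_is_unimodular}.
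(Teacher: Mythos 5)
Your argument is correct. The paper states this lemma without any proof, treating it as routine, and your verification (either via condition (1) with $B = \{\epsilon_1 x_1,\ldots,\epsilon_n x_n\}$ and $[\det B] = [\det B']$, or via the totally unimodular matrix $[M \mid -M]$) is exactly the standard argument the authors leave implicit.
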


%%%%%%%%%%%%%%%%%%%%%%%%%%%%%%%%%%%%%%%%%%%%%%%%%%%%%%%%%%%%%%%%%%%%%%%%%%%%%%%%%%
\subsection{Proposition~\ref{prop:Z_p_otimes_Z_q_is_unimodular} $\Leftarrow$ Proposition~\ref{prop:X_otimes_Y_is_unimodular}}
%%%%%%%%%%%%%%%%%%%%%%%%%%%%%%%%%%%%%%%%%%%%%%%%%%%%%%%%%%%%%%%%%%%%%%%%%%%%%%%%%
%
When $p$ is an odd prime,
$Z_p$ is a maximal circuit of the $\mathbb{Q}$-vector space $\mathbb{Q}(\zeta_p)$.
Thus, Proposition~\ref{prop:X_otimes_Y_is_unimodular}
is a generalization of
Proposition~\ref{prop:Z_p_otimes_Z_q_is_unimodular}.

%%%%%%%%%%%%%%%%%%%%%%%%%%%%%%%%%%%%%%%%%%%%%%%%%%%%%%%%%%%%%%%%%%%%%%%%%%%%%%%%%%
%
\section{Proof of Theorem~\ref{thm:X_otimes_Y_is_unimodular}}
\label{sec:proof_of_the_last_Prop}
%
%%%%%%%%%%%%%%%%%%%%%%%%%%%%%%%%%%%%%%%%%%%%%%%%%%%%%%%%%%%%%%%%%%%%%%%%%%%%%%%%%
%
The main theorem has been reduced to Proposition~\ref{prop:X_otimes_Y_is_unimodular}.
In this section, we prove it.
Since Proposition~\ref{prop:X_otimes_Y_is_unimodular} (1) is almost trivial,
the main task is to prove Proposition~\ref{prop:X_otimes_Y_is_unimodular} (2), 
that is, Theorem~\ref{thm:X_otimes_Y_is_unimodular}.

%%%%%%%%%%%%%%%%%%%%%%%%%%%%%%%%%%%%%%%%%%%%%%%%%%%%%%%%%%%%%%%%%%%%%%%%%%%%%%%%%%
\subsection{Every maximal circuit is unimodular}\label{subsec:proof_of_(1)}
%%%%%%%%%%%%%%%%%%%%%%%%%%%%%%%%%%%%%%%%%%%%%%%%%%%%%%%%%%%%%%%%%%%%%%%%%%%%%%%%%
%
First, we prove Proposition~\ref{prop:X_otimes_Y_is_unimodular} (1).
Namely, we show that every maximal circuit is unimodular.

\begin{proof}[Proof of Proposition~\textsl{\ref{prop:X_otimes_Y_is_unimodular}} \textsl{(1)}.]
It suffices to show that the following matrix is totally unimodular:
\[
   \begin{pmatrix}
   -1 & 1 & 0 & \ldots & 0 \\
   -1 & 0 & 1 & \ldots & 0  \\
   \vdots & \vdots & & \ddots & \vdots \\
   -1 & 0 & 0 & \ldots & 1  
   \end{pmatrix}.
\]
This is immediate from the following lemma.
\end{proof}

\begin{lemma} \label{lem:(A I)_is_totally_unimodular}\slshape
   If an $m$ times $n$ matrix $A$ is totally unimodular, 
   then $(A \,\,\, I_m)$ is also totally unimodular.
   Here $I_m$ is the unit matrix of size $m$.
\end{lemma}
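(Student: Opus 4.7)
The plan is to verify condition (3) of total unimodularity directly for $(A\;I_m)$ by inspecting an arbitrary square submatrix and reducing its determinant to that of a square submatrix of $A$ itself.

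Fix a $k\times k$ submatrix $M$ of $(A\;I_m)$. Let $R\subseteq\{1,\ldots,m\}$ index its rows, and partition its column indices into a set $C_A$ of columns drawn from $A$ and a set $C_I\subseteq\{1,\ldots,m\}$ of columns drawn from $I_m$, so that $|C_A|+|C_I|=k$. The first observation is that each column of $(A\;I_m)$ coming from $I_m$ with index $c$ has its unique nonzero entry in row $c$. So if some $c\in C_I$ satisfies $c\notin R$, the corresponding column of $M$ is identically zero on the selected rows, and hence $\det M=0$.

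Otherwise $C_I\subseteq R$. After reordering the rows of $M$ so that the elements of $R\setminus C_I$ come first and the elements of $C_I$ come last (in some fixed order), and reordering the columns so that those in $C_A$ come first and those in $C_I$ come last (in the same order as used on the rows), the submatrix assumes the block form
\[
   \begin{pmatrix} A' & 0 \\ * & I_{|C_I|} \end{pmatrix},
\]
where $A'$ is the submatrix of $A$ with rows indexed by $R\setminus C_I$ and columns indexed by $C_A$. The zero block in the upper right comes from the fact that a row $r\in R\setminus C_I$ meets an $I_m$-column $c\in C_I$ only at a $0$, since $r\ne c$. Consequently $\det M=\pm\det A'$, and since $A$ is totally unimodular, $\det A'\in\{-1,0,1\}$, whence $\det M\in\{-1,0,1\}$.

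There is no real obstacle here; the only care required is in the bookkeeping of the row/column permutation used to achieve block-triangular form, and the resulting sign is harmless because the set $\{-1,0,1\}$ is closed under negation.
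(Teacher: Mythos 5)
Your proof is correct. The paper itself states this lemma without proof (it is a standard fact from the theory of totally unimodular matrices, found e.g.\ in Schrijver's book cited as [S]), so there is nothing to compare against; your argument---discarding the case where an identity column has its $1$ outside the selected rows, and otherwise permuting to the block-triangular form $\left(\begin{smallmatrix} A' & 0 \\ * & I \end{smallmatrix}\right)$ to reduce to a square submatrix of $A$---is exactly the standard one, and it is complete (including the degenerate case $C_A=\emptyset$, where $A'$ is empty and $M$ is a permutation of an identity matrix).
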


%%%%%%%%%%%%%%%%%%%%%%%%%%%%%%%%%%%%%%%%%%%%%%%%%%%%%%%%%%%%%%%%%%%%%%%%%%%%%%%%%%
\subsection{The tensor product of two maximal circuits is unimodular}\label{subsec:proof_of_(2)}
%%%%%%%%%%%%%%%%%%%%%%%%%%%%%%%%%%%%%%%%%%%%%%%%%%%%%%%%%%%%%%%%%%%%%%%%%%%%%%%%%
%
Next, we prove Proposition~\ref{prop:X_otimes_Y_is_unimodular} (2),
namely Theorem~\ref{thm:X_otimes_Y_is_unimodular} using network matrices.

Let us explain the concept of a network matrix \cite{S,T}.
For a directed tree $(\mathcal{V}, \mathcal{T})$ and
a directed graph $(\mathcal{V}, \mathcal{E})$,
we define the $\mathcal{T} \times \mathcal{E}$ matrix $M = (M_{x,y})$ as follows.
For $x \in \mathcal{T}$ and $y = (u, v) \in \mathcal{E}$,
we put
\[
   M_{x,y} = 
   \begin{cases}
      1, & \text{if $b$ occurs in forward direction in $P$}, \\
      -1,& \text{if $b$ occurs in backward direction in $P$}, \\
      0,& \text{if $b$ does not occur in $P$},
   \end{cases}
\]
where $P$ be the unique undirected path from $u$ to $v$ in $\mathcal{T}$. 
Then the matrix $M$ is called the network matrix
represented by $(\mathcal{V}, \mathcal{T})$ and $(\mathcal{V}, \mathcal{E})$.
In general, network matrices are known to be totally unimodular.

\begin{proof}[Proof of Theorem~\textsl{\ref{thm:X_otimes_Y_is_unimodular}}.]
Put $m = |X| - 1$ and $n = |Y| - 1$.
Let us denote the elements of $X$ by $e_0, e_1,\ldots,e_m$,
and the elements of $Y$ by $f_0, f_1,\ldots,f_n$:
\[
   X = \{ e_0, e_1,\ldots,e_m \}, \qquad
   Y = \{ f_0, f_1,\ldots,f_n \}.
\]
Without loss of generality,
we can assume that
\[
   X_+ = \{ e_1,\ldots,e_m \}, \qquad 
   Y_+ = \{ f_1,\ldots,f_n \}
\]
are the standard bases of $V = \mathbb{Q}^m$ and $W = \mathbb{Q}^n$, respectively.
Noting Lemma~\ref{lem:(A I)_is_totally_unimodular}, 
we see that it suffices to prove that 
\[
   R = 
   \{ e_0 \otimes f_0 \} \sqcup
   (e_0 \otimes Y_+) \sqcup 
   (X_+ \otimes f_0)
\]
is a unimodular system.
Indeed $X_+ \otimes Y_+$ can be identified with the unit matrix $I_{mn}$.
We denote the $mn \times (m+n+1)$ matrix corresponding to $R$ by $A$.
For example, when $(m,n)=(2,3)$, we have
\[
   A = 
   \left(
   \begin{array}{c|ccc|cc}
   1 & -1 & 0 & 0 & -1 & 0 \\
   1 & -1 & 0 & 0 & 0 & -1 \\
   1 & 0 & -1 & 0 & -1 & 0 \\
   1 & 0 & -1 & 0 & 0 & -1 \\
   1 & 0 & 0 & -1 & -1 & 0 \\
   1 & 0 & 0 & -1 & 0 & -1 \\
   \end{array}
   \right).
\]

Then, the transposed matrix ${}^t\!A$ is equal to 
the network matrix represented by $(\mathcal{V},\mathcal{T})$ and $(\mathcal{V},\mathcal{E})$.
Here we put
\begin{align*}
   \mathcal{V} &= \{ e_0,e_1,\ldots,e_m \} \cup \{ f_0,f_1,\ldots,f_n \}, \\
   \mathcal{T} &=
   \big\{ (e_0, f_0) \big\} \cup
   \big\{ (e_0, f_j) \,\big|\, 1 \leq j \leq n \big\} \cup
   \big\{ (e_i, f_0) \,\big|\, 1 \leq i \leq m \big\}, \\
   \mathcal{E} &= \{ (e_i, f_j) \,|\, 1 \leq i \leq m, \,\, 1 \leq j \leq n \}.
\end{align*}
Namely $\mathcal{E}$ is the complete bipartite graph with bipartition $(\{ e_1,\ldots,e_m \}, \{ f_1,\ldots,f_n \})$.
Hence ${}^t\!A$ is totally unimodular, and so is $A$.
This means the unimodularity of $R$, and therefore of $X_+ \otimes Y_+$.
\end{proof}

\begin{remark}
We note that slightly weaker result than Theorem~\ref{thm:X_otimes_Y_is_unimodular} can be derived from matroid theory.
Namely, when vector systems $X$ and $Y$ are maximal circuits,
$X \otimes Y$ is isomorphic to a unimodular system as matroids.
This follows from this following two facts: 
\begin{itemize}
   \item When $X$ and $Y$ are maximal circuits,
   $X \otimes Y$ is isomorphic 
   to the cographic matroid determined by the complete bipartite graph with bipartition $(X, Y)$.
   \item Both graphic matroids and cographic matroids are regular matroids, 
   and every regular matroid can be realized by a unimoudlar system \cite{O}.
\end{itemize}
Our Theorem~\ref{thm:X_otimes_Y_is_unimodular} is stronger than this,
because it asserts that $X \otimes Y$ \textit{itself} is unimodular.

We also note that the root system of type $\mathrm{A}_n$ is 
isomorphic to the graphic matroid determined by the complete graph $K_{n+1}$
(by identifying $v$ with~$-v$).
Thus, the tensor product of two maximal circuits
can be regarded as an analogue of the root system of type $\mathrm{A}_n$
in the framework of bipartite graphs.
\end{remark}

\begin{remark}
This proof of Theorem~\ref{thm:X_otimes_Y_is_unimodular} was greatly simplified
following the reviewer's suggestion after the first version of this article was submitted.
The reviewer pointed out that a simpler proof using network matrices exists.
The previous proof of Theorem~\ref{thm:X_otimes_Y_is_unimodular} was 
based on an argument in the complete bipartite graph.
The previous proof is long, but
it is worth noting that the unimodularity of the root system of type~$\mathrm{A}_n$
can similarly be proved by an argument in the complete graph $K_{n+1}$.
\end{remark}

%%%%%%%%%%%%%%%%%%%%%%%%%%%%%%%%%%%%%%%%%%%%%%%%%%%%%%%%%%%%%%%%%%%%%%%%%%%%%%%%%%
%
\section*{Acknowledgements}
%
%%%%%%%%%%%%%%%%%%%%%%%%%%%%%%%%%%%%%%%%%%%%%%%%%%%%%%%%%%%%%%%%%%%%%%%%%%%%%%%%%
%
The authors would like to thank the reviewers for their valuable comments.
Thanks to their suggestions, the proof was greatly simplified,
and the importance of Theorem~\ref{thm:X_otimes_Y_is_unimodular} has been more clearly explained.

%%%%%%%%%%%%%%%%%%%%%%%%%%%%%%%%%%%%%%%%%%%%%%%%%%%%%%%%%%%%%%%%%%%%%%%%%%
%
% References
%
%%%%%%%%%%%%%%%%%%%%%%%%%%%%%%%%%%%%%%%%%%%%%%%%%%%%%%%%%%%%%%%%%%%%%%%%%%
%

\end{document}